\newtheorem{theorem}{Theorem}[section]
\newtheorem{lemma}{Lemma}[section]
\newtheorem{remark}{Remark}[section]
\begin{document}

\title[Non-simultaneous blow-up]{Non-simultaneous blow-up for a system with local and non-local diffusion}


\author[L. M. Del Pezzo]{Leandro M. Del Pezzo}
	\address{Leandro M. Del Pezzo \hfill\break\indent
		CONICET and UTDT \hfill\break\indent
		Departamento de Matem\'aticas y
		Estad\'istica
		\hfill\break\indent Universidad Torcuato Di Tella
		\hfill\break\indent Av. Figueroa Alcorta 7350 (C1428BCW)
		\hfill\break\indent Buenos Aires, ARGENTINA. }
	\email{ldelpezzo@utdt.edu}
	\urladdr{http://cms.dm.uba.ar/Members/ldpezzo/}

\author[R. Ferreira]{Ra\'ul Ferreira}
\address{Ra\'ul Ferreira\hfill\break\indent
Departamento de Matem\'atica Aplicada,
\hfill\break\indent Fac. de C.C. Qu\'{\i}micas, U. Complutense de Madrid,
\hfill\break\indent 28040,  Madrid, Spain.
 }
\email{raul\_ferreira@mat.ucm.es}

\begin{abstract}
    We study the possibility of non-simultaneous blow-up for positive solutions of a
    coupled system of two semilinear equations,
    $u_t = J*u-u+ u^\alpha v^p$, $v_t =\Delta v^+u^qv^\beta$, $p, q, \alpha, \beta>0$ with
    homogeneous Dirichlet boundary conditions and positive initial data.
    We also give the blow-up rates for non-simultaneous blow-up.
    
    \smallskip
    
    \noindent \textbf{Keywords.} Local operator, non-local operator, system, blow-up, non-simultaneous blow-up

\end{abstract}

\maketitle

\section{Introduction}\label{intro}	

    Let $\Omega$ be a smooth boundary domain of $\mathbb{R}^N,$
    $p,q>0$ and $\alpha,\beta\ge0.$ Our main interest lies in the blow-up phenomena
    for the following local-nonlocal  reaction
    diffusion system
    \begin{equation}\label{rds}
        \begin{cases}
		u_t=\mathcal{L} u+u^\alpha v^p \qquad & x\in\Omega, t>0,\\
		v_t=\Delta v+u^qv^\beta& x\in\Omega, t>0,
	\end{cases}
    \end{equation}
    under Dirichlet ``boundary" conditions
    \begin{equation}\label{dbc}
	\begin{cases}
	    u(x,t)=0  & x\in\mathbb R^n\setminus \Omega, t>0,\\
	    v(x,t)=0& x\in\partial\Omega, t>0,
	\end{cases}
    \end{equation}
    and initial data
    \begin{equation}\label{id}
	\begin{cases}
	    u(x,0)=u_0(x)  & x\in\Omega,\\
	    v(x,0)=v_0(x)& x\in\Omega.
	 \end{cases}
    \end{equation}
    Here $\mathcal L u=J*u-u$ is a non-local diffusion operator for which the convolution kernel $J$ satisfies the following assumptions which will be assumed
    through out all this article
    \[
	J\in C_0(\mathbb{R}^N,\mathbb{R}) \text{ is a nonnegative function with } J(0)>0 \text{ and }
	\int_{\mathbb{R}^N}J(x)\, dx=1.
    \]
    Regarding the initial data, we assume that $u_0,v_0\in C(\overline{\Omega})$ are non-negative functions.
	
    \medskip

    This type of system appears in some biological models, more precisely
    in the last year, the study of dispersal strategies and the comparison between local and nonlocal
    diffusive behaviours have  attracted a great attention both in terms of experiments and
    from the purely mathematical point of view. It is worth mentioning that, the scientific community
    has not arrive to agreement about  which non-local models better describe the dispersion of
    a real biological population, for this reason many non-local terms have been taken into account
    in order to comprise long-range effects. But the present literature on the subject
    of non-local dispersal mostly considers convolution operators. See for instance \cite{D,K2010,K2012,C,Cosner,M}
    and the references therein for more details.

    \medskip
	
    Problem \eqref{rds}-\eqref{id} provide a simple example of a reaction-diffusion system of
    Fujita type with two different difusión, local in one component and no-local in the other.
    When the two diffusion are given by the same diffusion
    \begin{equation}\label{eq.sistemalocal}
        \left\{
            \begin{array}{ll}
                u_t=\Delta u+u^\alpha v^p\\
                v_t=\Delta v+u^q v^\beta\\
            \end{array}
        \right.
        \qquad
            \left\{
            \begin{array}{ll}
                u_t=\mathcal{L}u+u^\alpha v^p,\\
                v_t=\mathcal{L}v+u^q v^\beta,\\
            \end{array}\right.
    \end{equation}
    under homogeneous boundary conditions, it is well known that there exists blow-up solutions if and only if
    $$
       \alpha>1, \quad \beta>1,\quad \text{and}\quad pq>(1-\alpha)(1-\beta),
    $$
    see for instance \cite{EL,Chen,W,B} for the local case and \cite{RF} for the nonlocal case.  At time $T$ we have,
    $$
        \limsup_{t\to T}\ \max \{\|u(\cdot,t)\|_\infty, \|v(\cdot,t)\|_\infty\}=\infty.
    $$
    Note that the solution blows up if only one of the components blows up, it is what we call non-simultaneous blow-up.
    In \cite{QR, RF} the authors consider the systems \eqref{eq.sistemalocal} and they prove that if only $v$ blows up then $\beta>1+p$.
    The fact that $\beta>1$ ensures the existence of initial data such that $v$ blows up and the condition $p<\beta-1$
    implies that the coupling of the system is weak enough to get non-simultaneous blow-up.

    As previous paper in the study of blow-up with problems that mix local and non-local diffusions, we refer  to \cite{BR} where the the system
    $$
        \left\{
        \begin{array}{l}
            u_t=\mathcal{L}u+v^p \\
            v_t=\Delta v+u^q
        \end{array}\right.
    $$
    has been analysed for mixed boundary condition
    (Dirichlet for the non-local component and Neumann for the local one).
    However, this system is weakly coupled ($\alpha=0$ and $\beta=0$), then non-simultaneous blow-up cannot occurs.

    \medskip

    Existence of a solution can easily be achieved, but uniqueness is subtle due to
    the fact that the parameters $\alpha, \beta, p, q$ can be less than 1. The difficulty comes from the non-Lipschitz character of the reaction, see for instance \cite{EH}. Nevertheless, in this case we can prove existence of a maximal solution just taking the limit
    $$
    (u,v)=\lim_{\varepsilon\to0} (u_\varepsilon, v_\varepsilon)
    $$
    where $(u_\varepsilon, v_\varepsilon)$ is the solution to \eqref{rds} with initial data $(u_\varepsilon(x,0), v_\varepsilon(x,0))=(u_0(x)+\varepsilon, v_0(x)+\varepsilon)$, boundary data $u=\varepsilon$ outside $\Omega$ and $v_\varepsilon=\varepsilon$ at the boundary of $\Omega$ and the non-Lipschitz power replace by
    $$
    f_\varepsilon (w,\gamma)=\left\{
    \begin{array}{ll}
    w^\gamma \qquad & w\ge \varepsilon,\\
    \varepsilon^{\gamma} & w\le \varepsilon.
    \end{array}\right.
    $$
    Moreover, a comparison principle among maximal solutions can easily be obtained, see for instance \cite[Lemma 24]{BR}. Notice that if the reaction terms are Lipschitz we have uniqueness. From now on, we only consider maximal solutions.

    \subsection*{Main results}
    The goal of this paper is to study the non-simultaneous blow-up when we mix local and non-local diffusions.
    Our first goal in the study of system \eqref{rds} is to obtain, in term of the parameters, the existence of blow-up solutions.
    \begin{theorem}\label{teo.1.1}
        Let $(u,v)$ be a  solution of \eqref{rds}-\eqref{id}. Then,
        \begin{enumerate}[(i)]
            \item If $\alpha<1$, $\beta<1$ and $pq\le (1-\alpha)(1-\beta)$ all solutions are global.
            \item Other wise there exist both, blow-up and global solutions.
        \end{enumerate}
    \end{theorem}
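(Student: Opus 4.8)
The plan is to prove the two parts separately, treating part (i) as the global-existence regime and part (ii) as the blow-up regime. Let me think carefully about how to attack each.

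For part (i), the hypothesis is $\alpha<1$, $\beta<1$ and $pq\le(1-\alpha)(1-\beta)$. I want to construct a global supersolution. The key observation is that the nonlocal operator $\mathcal{L}u = J*u - u$ satisfies $\int_\Omega \mathcal{L}u \le 0$-type bounds (it's dissipative: $J*u - u$ for $u$ constant gives... if $u$ is a spatial constant $M$, then $J*M - M = M(\int J - 1)$... but the convolution is over all of $\mathbb{R}^N$ and $u=0$ outside $\Omega$, so this isn't exactly zero). Actually for the nonlocal operator with Dirichlet conditions outside $\Omega$, a constant supersolution $u \equiv M$ inside gives $\mathcal{L}M = J*M - M \le M - M = 0$ since $J*M \le M\int_{\mathbb{R}^N} J = M$ when $M$ is the constant extended as $M$ on $\Omega$ and... no wait, $u$ is forced to zero outside $\Omega$. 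Hmm, let me reconsider. If I take a supersolution that's constant $M$ on all of $\mathbb{R}^N$ (ignoring the boundary condition, which only helps for a supersolution since the true solution is zero outside), then $\mathcal{L}M = M\int J - M = 0$. So a spatially constant function has $\mathcal{L} = 0$.

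So the strategy for (i): build a supersolution of the form $(\bar u(t), \bar v(t))$ depending only on time, solving the ODE system $\bar u' = \bar u^\alpha \bar v^p$, $\bar v' = \bar u^q \bar v^\beta$ with $\bar u, \bar v$ large constants initially. For this to stay global we need the associated ODE system to have global solutions. The condition $\alpha<1, \beta<1, pq\le(1-\alpha)(1-\beta)$ is precisely the classical condition under which the Fujita-type ODE system $\bar u'=\bar u^\alpha\bar v^p$, $\bar v'=\bar u^q\bar v^\beta$ does not blow up in finite time. I would verify this by looking for a Lyapunov-type relation: consider $\bar u^{1-\alpha}$ and $\bar v^{1-\beta}$; compute their derivatives and reduce to a linear-in-exponents inequality. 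The main analytic content is showing this ODE system is global under the stated inequality, which is standard but needs the borderline case $pq=(1-\alpha)(1-\beta)$ handled (there one typically gets at most exponential, hence global, growth). Then invoke the comparison principle among maximal solutions (cited from \cite{BR}) to conclude $(u,v)\le(\bar u,\bar v)$ and hence global.

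For part (ii), I must produce both a blow-up solution and a global solution whenever we are \emph{not} in case (i); that is, whenever $\alpha\ge1$, or $\beta\ge1$, or $pq>(1-\alpha)(1-\beta)$. Global solutions are the easy half: for small enough initial data one expects global existence, which can be shown by constructing a small global supersolution — e.g. using the fact that the local heat semigroup and the nonlocal semigroup both have the strong Dirichlet decay, so a suitably small multiple of the first Dirichlet eigenfunction (for the $\Delta$ part) paired with an appropriate nonlocal profile gives a stationary or decaying supersolution; the subhomogeneity of the lower-order terms relative to the linear dissipation makes the smallness argument work. For the blow-up half, I would construct a subsolution that blows up in finite time. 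The cleanest route is to restrict attention to a ball $B\subset\Omega$ and look for a self-similar or separated-variable subsolution supported near the center, choosing the initial data large there; the reaction $u^\alpha v^p$ and $u^q v^\beta$ then dominate the diffusion and force blow-up by ODE comparison on an averaged quantity.

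The main obstacle, I expect, will be the blow-up subsolution in part (ii), specifically controlling the \emph{nonlocal} diffusion term $\mathcal{L}u = J*u - u$ from below. Unlike the Laplacian, $\mathcal{L}$ is nonnegative-order and one cannot simply discard it by maximum-principle at an interior maximum in the same way; for a compactly supported growing profile, $J*u$ can be smaller than $u$ near the support boundary, so $\mathcal{L}u$ is negative there and fights the reaction. I would handle this by choosing the spatial profile to be a constant on a slightly smaller ball and exploiting $\mathcal{L}(\text{const})=0$ on the interior, absorbing the negative boundary-layer contribution of $\mathcal{L}$ into a modest enlargement of the reaction, valid once the solution is large. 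The borderline exponent case $pq>(1-\alpha)(1-\beta)$ with both $\alpha,\beta<1$ is the delicate sub-case and will require the coupled structure (neither equation blows up alone, but the product coupling drives simultaneous blow-up), handled via a two-component ODE subsystem comparison rather than a single scalar bound.
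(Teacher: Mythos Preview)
Your plan is broadly sound and correctly identifies the supersolution/subsolution dichotomy, but it diverges from the paper's proof in several concrete ways, and one step is underspecified.

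\textbf{Part (i).} Your ODE-supersolution approach (spatially constant $(\bar u(t),\bar v(t))$ solving $\bar u'=\bar u^\alpha\bar v^p$, $\bar v'=\bar u^q\bar v^\beta$) is correct and arguably cleaner than what the paper does. The paper instead builds \emph{stationary} supersolutions $\bar u=Aw$, $\bar v=Bz$ where $w,z$ solve $-\mathcal L w=1$ and $-\Delta z=1$, and chooses $A,B$ according to the sign of $\kappa=pq-(1-\alpha)(1-\beta)$; only in the borderline case $\kappa=0$ does it resort to exponential-in-time supersolutions. Your route avoids the elliptic auxiliary problems entirely, at the cost of having to check that the coupled ODE is global when $pq\le(1-\alpha)(1-\beta)$ (standard, but you should state the argument).

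\textbf{Part (ii), global solutions.} Your small-data supersolution idea is the same as the paper's: the stationary pair $(Aw,Bz)$ above works for $A$ small when $\kappa\ge0$.

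\textbf{Part (ii), blow-up.} Here the paper splits into three cases and does \emph{not} build subsolutions in the first two: for $\alpha>1$ it uses the eigenfunction lower bound $v\ge Ce^{-\mu_1 t}\psi_1$ to reduce the $u$-equation to a pointwise ODE inequality; for $\beta>1$ it uses the lower bound $u\ge Ce^{-\lambda_1 t}$ plus Kaplan's eigenfunction method on the $v$-equation. Your subsolution route may also succeed in these two cases, but it is a different argument.

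The genuine soft spot in your plan is the third case, $\alpha,\beta<1$ with $pq>(1-\alpha)(1-\beta)$. Your sketch (``constant on a smaller ball, absorb the negative boundary-layer part of $\mathcal L$ into the reaction, two-component ODE comparison'') captures the right intuition for the $u$-component, and indeed the paper takes $\underline u=\varphi(t)\chi_R(x)$. But the $v$-subsolution cannot be a spatial constant (Dirichlet boundary) nor simply an eigenfunction profile $\phi_R(x)\eta(t)$: the latter vanishes on $\partial B_R$, so the reaction term $\underline u^\alpha\underline v^{\,p}$ in the $u$-inequality degenerates there and can no longer absorb the strictly negative $\mathcal L\underline u$ near $\partial B_R$. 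The paper resolves this by taking $\underline v=w$ to be the \emph{solution} of an auxiliary heat equation on a larger ball $B_L\supset B_R$ with source $\varphi^q\chi_R w^\beta$, and then proving (via two layered subsolutions and Hopf's lemma) the uniform interior lower bound $w(x,t)\ge\varepsilon e^{(1-\varepsilon)^2(T-t)^{-1}}$ on all of $\overline{B_R}$. This uniform positivity on the support of $\underline u$ is the missing ingredient in your outline; without it the subsolution inequality for $u$ fails near $\partial B_R$.
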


    Now, we look for the possibility that just one component of the system \eqref{rds} blows up.
    To see this we restrict ourselves to solutions increasing in time.

    \begin{theorem}\label{teo.non2}
        Let $(u,v)$ be a  solution of \eqref{rds}-\eqref{id} with
        \[
            \begin{array}{l}
                \mathcal L u_0+ u_0^\alpha v_0^p>0,\\
                \Delta v_0 + u_0^q v_0^\beta>0.
            \end{array}
        \]
        \begin{enumerate}[(i)]
                \item If  $u$ blows up while $v$ remains bounded, then
                    \[
                        \max_{x\in\overline\Omega} u(x,t)\sim  (T-t)^{\frac{-1}{\alpha-1}}.
                    \]
                \item If  $v$ blows up while $u$ remains bounded, then $\beta>1+p$ and
                    \[
                        \max_{x\in\overline\Omega} v(x,t)\sim  (T-t)^{\frac{-1}{\beta-1}}.
                    \]
        \end{enumerate}
    \end{theorem}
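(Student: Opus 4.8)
The plan is to read off the time–monotonicity built into the hypotheses, then obtain each rate as a two–sided ODE estimate: one inequality comes from evaluating the equations at the spatial maximum, the other from a Friedman--McLeod type auxiliary function for the \emph{local} component. The decisive structural difference is that the nonlocal operator is bounded below pointwise, so for the $u$–component both bounds are elementary, whereas the Laplacian forces the use of an auxiliary function for the $v$–component.

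\textbf{Monotonicity.} Setting $U=u_t$, $V=v_t$ and differentiating \eqref{rds} in $t$ gives the linear cooperative system
\[
U_t=\mathcal L U+\alpha u^{\alpha-1}v^{p}\,U+p\,u^{\alpha}v^{p-1}\,V,\qquad V_t=\Delta V+q\,u^{q-1}v^{\beta}\,U+\beta u^{q}v^{\beta-1}\,V,
\]
whose off–diagonal coefficients $p u^{\alpha}v^{p-1}$ and $q u^{q-1}v^{\beta}$ are nonnegative since $u,v>0$. By hypothesis $U(\cdot,0)=\mathcal L u_0+u_0^\alpha v_0^p>0$ and $V(\cdot,0)=\Delta v_0+u_0^q v_0^\beta>0$, with $U=0$ outside $\Omega$ and $V=0$ on $\partial\Omega$; the comparison principle for such systems yields $U,V\ge 0$, so $u,v$ are nondecreasing in $t$. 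In particular $m(t):=\max_{\overline\Omega}u(\cdot,t)$ and $b(t):=\max_{\overline\Omega}v(\cdot,t)$ are nondecreasing, and $v_t=0$ on $\partial\Omega$.

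\textbf{Rate for $u$ (case (i)).} Let $x_t$ be a maximum point of $u(\cdot,t)$. Since $J*u(x_t)\le m(t)\int J=m(t)$ and $J*u(x_t)\ge0$, one has $-m(t)\le\mathcal L u(x_t)\le0$. Differentiating the maximum (Hamilton's trick) and using $v\le M$ on one side and $v(x_t,t)\ge c>0$ on the other gives
\[
-\,m(t)+c^{\,p}\,m(t)^{\alpha}\ \le\ m'(t)\ \le\ M^{p}\,m(t)^{\alpha}.
\]
The upper bound forces $\alpha>1$ and, integrated from $t$ to $T$, yields $m(t)\ge c_1(T-t)^{-1/(\alpha-1)}$; since $\alpha>1$ the lower bound gives $m'(t)\ge\tfrac{c^{p}}{2}m(t)^{\alpha}$ for $m$ large, hence $m(t)\le C_1(T-t)^{-1/(\alpha-1)}$. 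No auxiliary function is needed here, precisely because $\mathcal L u\ge -u$.

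\textbf{Rate for $v$ and the threshold $\beta>1+p$ (case (ii)).} At a maximum point of $v$ one has $\Delta v\le0$, so with $u\le A$ we get $b'(t)\le A^{q}b(t)^{\beta}$, forcing $\beta>1$ and giving $b(t)\ge c_2(T-t)^{-1/(\beta-1)}$. For the matching upper bound I would run the Friedman--McLeod argument on $Q=v_t-\delta v^{\beta}$: choosing $\delta$ small makes $Q\ge0$ at an initial time and on $\partial\Omega$ (where $Q=v_t=0$), and a computation shows $Q_t\ge\Delta Q+cQ$, the coupling term $q\,u^{q-1}v^{\beta}u_t\ge0$ carrying the right sign because $u_t\ge0$; hence $v_t\ge\delta v^{\beta}$, so $b'(t)\ge\delta\,b(t)^{\beta}$ and $b(t)\le C_2(T-t)^{-1/(\beta-1)}$. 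Finally, fix an interior point $x$ where $v$ blows up: since $u$ is bounded and increasing, $\int_{t_0}^{T}u_t(x,t)\,dt<\infty$, while $u_t\ge -u+u^\alpha v^p\ge -A+c^{\alpha}v(x,t)^{p}$ there; feeding in $v(x,t)\sim(T-t)^{-1/(\beta-1)}$ forces $\int_{t_0}^{T}(T-t)^{-p/(\beta-1)}\,dt<\infty$, i.e.\ $\beta>1+p$.

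\textbf{Main obstacle.} The delicate points are the positive lower bounds $v(x_t,t)\ge c$ (at the maximum of $u$) and $u\ge c$ (at the blow-up point of $v$), both of which degenerate near $\partial\Omega$ where $v\to0$. To secure them I would argue that the relevant maxima and the blow-up point stay in a fixed compact subset of $\Omega$: the nonlocal operator loses mass near the boundary and thus drains $u$ there, keeping its maximum in the interior, while $v$ must vanish on $\partial\Omega$ and so attains its large values away from it; combined with the monotonicity $v(\cdot,t)\ge v(\cdot,t_0)>0$ on compact subsets, this delivers the required bounds. Controlling this boundary degeneracy, together with the sign of the coupling and nonlocal terms in the auxiliary function, is where essentially all the work lies.
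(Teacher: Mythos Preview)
Your overall plan matches the paper's: time-monotonicity from a cooperative system for $(u_t,v_t)$, and the upper rate from a Friedman--McLeod functional. The paper, however, runs the auxiliary functional for \emph{both} components at once: with $\mathcal H=u_t-\delta_1 u^\alpha$ and $H=v_t-\delta_2 v^\beta$ one checks that $(\mathcal H,H)$ solves a cooperative parabolic system with nonnegative initial and boundary data (the strict inequalities in the hypothesis allow small $\delta_i>0$), yielding $u_t\ge\delta_1 u^\alpha$ and $v_t\ge\delta_2 v^\beta$ everywhere. You do this only for $v$; for $u$ you instead try to read the upper rate from the equation at the spatial maximum, which forces you to know $v(x_t,t)\ge c>0$ there --- your ``main obstacle''. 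That obstacle is self-inflicted: run the same functional $\mathcal H=u_t-\delta u^\alpha$ and the bound $m(t)\le C(T-t)^{-1/(\alpha-1)}$ is immediate, with no need to locate the maximum of $u$ or bound $v$ from below at that point. (Your \emph{lower} bounds, via $\mathcal L u\le0$ and $\Delta v\le0$ at the maximum, are correct and in fact a bit quicker than the paper's route, which compares with an explicit flat ODE blow-up profile.)

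There is a genuine gap in your derivation of $\beta>1+p$. You fix an interior blow-up point $x$ and plug in $v(x,t)\sim(T-t)^{-1/(\beta-1)}$, but the rate has only been established for $\|v(\cdot,t)\|_\infty$; neither the existence and location of a fixed blow-up point nor the pointwise rate there has been shown. The paper repairs this by not fixing $x$: for each $t_0<T$ work at $x(t_0)$, the maximum point of $v(\cdot,t_0)$. Time-monotonicity gives $v(x(t_0),t)\ge v(x(t_0),t_0)=\|v(\cdot,t_0)\|_\infty$ for $t\ge t_0$; integrating the $u$-equation at the fixed spatial point $x(t_0)$ over $[t_0,T]$ produces a left-hand side tending to $0$ as $t_0\to T$ (since $u$ is bounded) and a right-hand side bounded below by $c\,\|v(\cdot,t_0)\|_\infty^{\,p}(T-t_0)\ge c\,(T-t_0)^{1-p/(\beta-1)}$, which forces $1-\tfrac{p}{\beta-1}>0$.
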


    To conclude this introduction, we would like to point out that working
    with mixed local and non-local diffusion it turned out to be a not easy
    mathematical problem since the known tools were not adapted in a trivial way.

\subsection*{The rest of this manuscript is organized as follows} In Section \ref{GE},
    we prove that there exist global solutions (the first item of Theorem  \ref{teo.1.1}).
    Then, in Section \ref{BUS}, we prove that there is a blow-up solution
    (the second item of Theorem  \ref{teo.1.1}). Lastly, in Section \ref{nsu},
    we study the possibility that one component of the system \eqref{rds} blows up at time $T<\infty$
    while the other remains bounded (proof of Theorem \ref{teo.non2}).

\section{Global existence}\label{GE}
    In this section, we prove the first item of Theorem \ref{teo.1.1}. To this
    end, we borrow ideas of \cite[Lemma 2.1]{RF}.
	
    \begin{lemma}
	If $p,q>0$ then there exist global solutions.
	Moreover
	\begin{itemize}
	    \item If $\max\{\alpha,\beta\}<1$ and $\kappa=pq-(1-\alpha)(1-\beta)<0$
			then every solution is bounded.
	    \item If $\max\{\alpha,\beta\}<1$ and $\kappa=0$
			then every solution is global.
	\end{itemize}
    \end{lemma}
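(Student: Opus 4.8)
The plan is to prove every assertion by comparison, dominating the maximal solution $(u,v)$ by an explicit supersolution and invoking the comparison principle for maximal solutions \cite[Lemma 24]{BR}; the global or bounded character of $(u,v)$ is then inherited from that of the supersolution. I would use two families of supersolutions, adapted from \cite[Lemma 2.1]{RF}. The first is \emph{spatially homogeneous}: a function $\bar u=\bar u(t)$ that is constant in $x$ on all of $\mathbb{R}^N$ satisfies $\mathcal{L}\bar u=J*\bar u-\bar u=0$, and a constant $\bar v=\bar v(t)$ satisfies $\Delta\bar v=0$, so $(\bar u(t),\bar v(t))$ is a supersolution of \eqref{rds}--\eqref{dbc} as soon as it solves the reaction ODE system $\bar u'=\bar u^\alpha\bar v^p$, $\bar v'=\bar u^q\bar v^\beta$ with $\bar u(0)\ge\|u_0\|_\infty$ and $\bar v(0)\ge\|v_0\|_\infty$. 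The second is \emph{stationary}: I take $\bar u=A\Phi$, $\bar v=B\Psi$, where $\Phi$ solves the nonlocal torsion problem $-\mathcal{L}\Phi=1$ in $\Omega$ with $\Phi=0$ outside (so $1\le\Phi\le\|\Phi\|_\infty<\infty$, the lower bound coming from the convergent Neumann series for $(I-J*)^{-1}$), and $\Psi$ solves $-\Delta\Psi=1$ in $\Omega$, $\Psi=0$ on $\partial\Omega$.

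For the global-existence statements I would use the homogeneous supersolution. The ODE system is increasing, and a dominant-balance analysis (writing $\bar v\sim\bar u^{\theta}$ with $\theta=(q-\alpha+1)/(p-\beta+1)$, legitimate since $\max\{\alpha,\beta\}<1$ makes both exponents positive) reduces it to $\bar u'\sim\bar u^{\delta}$ with $\delta=\alpha+p\theta$. A direct computation gives $\delta\le1\iff \kappa=pq-(1-\alpha)(1-\beta)\le0$, so the reaction ODE is global precisely when $\kappa\le0$. This yields the $\kappa=0$ statement at once: every maximal solution is dominated by a global (though unbounded) homogeneous supersolution, hence is global. For the first assertion, that global solutions exist for every $p,q>0$, it suffices to exhibit one family, and small initial data do the job, since below the relevant scale the reaction is controlled by the dissipation of $\mathcal{L}$ and $\Delta$ and a small stationary supersolution of the type below can be fitted.

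The delicate point is boundedness for $\kappa<0$, where the homogeneous supersolution is useless and I would use the stationary pair $(A\Phi,B\Psi)$. Since $\mathcal{L}(A\Phi)=-A$ and $\Delta(B\Psi)=-B$ in $\Omega$, and $\Phi\le M_1$, $\Psi\le M_2$, this pair is a supersolution provided
\[
A^{1-\alpha}\ge C B^{p}\qquad\text{and}\qquad B^{1-\beta}\ge C A^{q},\qquad C=\max\{M_1^{\alpha}M_2^{p},\,M_1^{q}M_2^{\beta}\}.
\]
Writing $B=A^{r}$, both inequalities hold for all large $A$ exactly when $q/(1-\beta)<r<(1-\alpha)/p$, and such an $r$ exists iff $pq<(1-\alpha)(1-\beta)$, i.e. $\kappa<0$. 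Because $\Phi\ge1$ and $\Psi$ is a positive torsion function, for $A$ large one has $A\Phi\ge u_0$ and $B\Psi\ge v_0$ (the latter via a Hopf-type comparison, using $v_0=0$ on $\partial\Omega$), so every maximal solution is trapped under a fixed bounded stationary supersolution and is globally bounded. The same computation explains the dichotomy at $\kappa=0$: there $r$ is forced to equal $(1-\alpha)/p=q/(1-\beta)$ and the constant $C>1$ makes both inequalities fail, so no bounded stationary supersolution of this form exists and one can only assert global existence.

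The main obstacle is the construction and basic properties of the nonlocal torsion function $\Phi$ — its positivity, boundedness, and the fact that it need not vanish at $\partial\Omega$ — together with reconciling this with the local component, whose torsion function $\Psi$ does vanish there; this mismatch in boundary behaviour between the local and nonlocal parts is exactly where the usual tools do not transfer verbatim. A secondary technical point is making the dominant-balance analysis of the reaction ODE rigorous at the threshold $\delta=1$, which I would do through an explicit sub/supersolution for the scalar ODE rather than by asymptotics.
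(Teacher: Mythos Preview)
Your approach is essentially the one in the paper: build supersolutions of two kinds --- stationary pairs $(A\Phi,B\Psi)$ with $\Phi,\Psi$ the nonlocal and local torsion functions, and spatially constant time-dependent pairs --- and conclude by comparison. The algebra you carry out for the stationary pair (existence of an exponent $r$ with $q/(1-\beta)<r<(1-\alpha)/p$ iff $\kappa<0$) is exactly the paper's computation, just parametrised differently; the paper eliminates $B$ from the two inequalities and reads off the same dichotomy in $\kappa$.

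The only genuine difference is the treatment of the borderline $\kappa=0$ with $\max\{\alpha,\beta\}<1$. The paper writes down explicitly the exponential pair $\bar u=Ae^{Bt}$, $\bar v=Ce^{Dt}$ with $D=B(1-\alpha)/p$, checks the two resulting algebraic inequalities, and observes they hold for any $A,C>0$ once $B$ is large; this gives global existence for \emph{all} initial data in one stroke. Your route via the reaction ODE and dominant balance is the same idea one level up: on the invariant curve $\bar v=c\,\bar u^{\theta}$ the scalar equation is $\bar u'=C\bar u^{\delta}$ with $\delta=1$ precisely when $\kappa=0$, and its solution is exponential --- so making your heuristic rigorous reproduces the paper's ansatz. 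There is no gap, but be aware that the ``asymptotic'' language has to be replaced by that explicit construction (as you yourself note), and that once you do so you have exactly the paper's proof. The boundary-matching issue you flag for $B\Psi\ge v_0$ is real but is handled the same way in both arguments (and is implicit in the paper as well).
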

	
    \begin{proof}
	Let $w,z$ be the positive bounded solutions of
	\[
	    \begin{cases}
		-\mathcal{L}(u)=1 &x\in\Omega,\\
				u=0& x\in\mathbb{R}^N\setminus\Omega,
	    \end{cases}
	\]
	and
	\[
	    \begin{cases}
		-\Delta u=1 &x\in\Omega,\\
		u=0& x\in\partial\Omega,
	    \end{cases}
	\]
	respectively. We next set
	\[
	    \bar{u}=Aw \quad\text{and}\quad\bar{v}=Bz
	\]
	where $A$ and $B$ are positive constants that will be selected so that
	the pair $(\bar{u},\bar{v})$ will be a supersolution of \eqref{rds}.
	Specifically $(\bar{u}, \bar{v})$ is a supersolution of \eqref{rds} if
	\begin{equation}\label{eq:aux1}
	    1\ge A^{\alpha-1} B^p\|w\|_\infty^{\alpha}\|z\|_\infty^{p}
			\quad\text{and}\quad1\ge A^{q} B^{\beta-1}\|w\|_\infty^{q}\|z\|_\infty^{\beta}.
	\end{equation}
	Taking
	\[
	    1= A^{\alpha-1} B^p\|w\|_\infty^{\alpha}\|z\|_\infty^{p},
	\]
	that is
	\[
	    B=\dfrac{A^{\nicefrac{(1-\alpha)}p}}{\|w\|_\infty^{\nicefrac{\alpha}p}\|z\|_\infty},
	\]
	the second inequality of \eqref{eq:aux1} reads
	\[
	    A^{\nicefrac{\kappa}p}\le\|w\|_{\infty}^{-\nicefrac{(pq+\alpha(1-\beta))}p}
			\|z\|_{\infty}^{-1}
	\]
	where $\kappa=pq-(1-\alpha)(1-\beta)\neq0.$
	Therefore 		
	\begin{itemize}
		\item If $\kappa>0$ then the inequalities \eqref{eq:aux1} hold for $A$ small.
		\item If $\kappa<0$ then the inequalities \eqref{eq:aux1} hold for $A$ large.
	\end{itemize}
	Thus, in both cases, we obtain global solution by comparison.
		
	Moreover if $\kappa<0$ and $\max\{\alpha,\beta\}<1,$ the inequalities \eqref{eq:aux1}
		hold for $A$ and $B$ large. For this reason, in this case, every solution is bounded.
		
	We now study the case $k=0$ and $\min\{\alpha,\beta\}\ge1.$ In this case, we take
	\[
	    B=\dfrac{1}{\|w\|_\infty^{\nicefrac{\alpha}p}\|z\|_\infty}
	\]
	and the inequalities \eqref{eq:aux1} read
	\[
	    1\ge A^{\alpha-1}
	    \quad\text{and}\quad \|w\|_{\infty}^{-\nicefrac{(pq+\alpha(1-\beta))}p}
	    \|z\|_{\infty}^{-1}\ge A^{q}.
	\]
	Therefore the the inequalities \eqref{eq:aux1} hold for $A$ small.
	Again, we obtain global solution by comparison.
		
	Finally, we study the case $\kappa=0,$ and $\max\{\alpha,\beta\}<1.$
	We propose as a supersolution of
	\eqref{rds} the pair
	\[
        	    \bar{u}(x,t)=Ae^{Bt} \quad\text{and}\quad\bar{v}(x,t)=Ce^{Dt}
	\]
	where $A,B,C,D$ are positive constants such that
	\[
	    Be^{((1-\alpha)B-pD)t}\ge A^{\alpha-1}C^p \quad\text{ and }\quad
		De^{((1-\beta)B-qD)t}\ge A^qC^{1-\beta}.
	\]
		
	If we take $D=B\tfrac{(1-\alpha)}{p},$
	the last inequalities reads
	\[
	    B\ge A^{\alpha-1}C^p \quad\text{ and }\quad
			\frac{(1-\alpha)}{p}B\ge A^qC^{1-\beta}.
	\]
	since $\kappa=0.$ Therefore the previous inequalities hold for any $A>0$ and $C>0$ taking
	$B$ large enough. Then, in this case, we also obtain global solution by comparison. Moreover
	all the solution of \eqref{rds} are global.		  	
    \end{proof}
	
\section{blow-up solutions}\label{BUS}
     Here we prove that there is a blow-up solution
    (the second item of Theorem  \ref{teo.1.1}).

    \begin{lemma}\label{lema.aux1}
	Let $\lambda_1,\mu_1$ be the first Direichlet eigenvalues of $\mathcal{L}$ and $\Delta$
	respectively, and $(u,v)$ be a solution of \eqref{rds}--\eqref{id}. Then there is a positive
	constant $C$ such that
	\[
	    u(x,t)\ge Ce^{-\lambda_1 t},\quad v(x,t)\ge Ce^{-\mu_1 t}\psi(x)\quad x\in\overline{\Omega}, t>0
	\]
	where $\psi$ is a positive eigenfunction of $\Delta$ associated to $\mu_1.$
    \end{lemma}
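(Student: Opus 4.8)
The plan is to discard the (nonnegative) reaction terms so that each component becomes a supersolution of its own \emph{linear} diffusion equation, and then to bound it from below by an explicit separated-variables solution built from the corresponding principal eigenfunction.

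First I would record that, since the maximal solution is nonnegative, both reaction terms $u^\alpha v^p$ and $u^q v^\beta$ are nonnegative. Dropping them in \eqref{rds} gives
\[
u_t \ge \mathcal L u \qquad\text{and}\qquad v_t \ge \Delta v \quad\text{in } \Omega\times(0,\infty),
\]
so that $u$ is a supersolution of the purely nonlocal equation $w_t=\mathcal L w$ (with $w=0$ in $\mathbb R^N\setminus\Omega$) and $v$ is a supersolution of the heat equation $z_t=\Delta z$ (with $z=0$ on $\partial\Omega$). In particular the two components decouple for the purpose of a lower bound, and no comparison for the full system is needed.

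Next I would introduce the principal Dirichlet eigenpairs: let $\phi_1>0$ solve $-\mathcal L\phi_1=\lambda_1\phi_1$ in $\Omega$ with $\phi_1=0$ outside $\Omega$, and let $\psi>0$ solve $-\Delta\psi=\mu_1\psi$ in $\Omega$ with $\psi=0$ on $\partial\Omega$. The functions
\[
\underline u(x,t)=c_1 e^{-\lambda_1 t}\phi_1(x), \qquad \underline v(x,t)=c_2 e^{-\mu_1 t}\psi(x)
\]
solve the linear equations $w_t=\mathcal L w$ and $z_t=\Delta z$ exactly, since $\partial_t\underline u=-\lambda_1\underline u=\mathcal L\underline u$ and $\partial_t\underline v=-\mu_1\underline v=\Delta\underline v$. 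Choosing $c_1,c_2>0$ small enough that $c_1\phi_1\le u_0$ and $c_2\psi\le v_0$, and observing that $\underline u=0=u$ in $\mathbb R^N\setminus\Omega$ and $\underline v=0\le v$ on $\partial\Omega$, the comparison principle applied to each scalar linear equation (for which $u$ and $v$ are supersolutions by the first step) yields $u\ge\underline u$ and $v\ge\underline v$ for all $t>0$. The bound for $v$ is then exactly the claimed $v\ge c_2 e^{-\mu_1 t}\psi$.

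To pass from $u\ge c_1 e^{-\lambda_1 t}\phi_1$ to the \emph{spatially uniform} bound $u\ge C e^{-\lambda_1 t}$, I would invoke the key structural feature of the nonlocal problem: the principal eigenfunction $\phi_1$ is continuous and strictly positive up to the boundary of $\Omega$ (it is not forced to vanish on $\partial\Omega$, only outside $\Omega$), so $m:=\min_{\overline\Omega}\phi_1>0$ and hence $u\ge c_1 m\, e^{-\lambda_1 t}$. Taking $C=\min\{c_1 m,\,c_2\}$ then finishes the proof. I expect the main obstacle to be precisely this last point, namely justifying $\min_{\overline\Omega}\phi_1>0$: it is exactly where the nonlocal character helps, and it rests on the existence and regularity theory of the principal Dirichlet eigenpair of $\mathcal L$ (as developed in the cited references). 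A secondary, purely technical, point is the initial matching $c_2\psi\le v_0$, which asks that $v_0$ dominate $\psi$ near $\partial\Omega$ and is where positivity of the initial data enters.
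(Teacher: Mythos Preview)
Your argument is correct and essentially matches the paper's proof: the paper likewise takes $(k\phi_1,k\psi)$ with $k>0$ small, observes it is a subsolution of \eqref{rds}--\eqref{id} (equivalently, of the linear equations once the nonnegative reactions are dropped), and then uses $\min_{\overline\Omega}\phi_1>0$ (citing \cite{LibroJulio}) to pass to the uniform lower bound for $u$. The only cosmetic difference is that the paper phrases the comparison at the level of the full coupled system rather than decoupling first.
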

    \begin{proof}
        Let $\phi$ and  $\psi$ be positive eigenfunctions of $\mathcal{L}$ and $\Delta$
	associate to $\lambda_1$ and $\mu_1,$ respectively. It is well known that
	$\phi(x)\ge\delta>0$ in $\overline{\Omega},$ see for instance \cite{LibroJulio}.
	Then it is easy to see that there is a positive constant $k$ so small
	that $(k\phi,k\psi)$ is a subsolution of \eqref{rds}--\eqref{id}. Hence, by comparison,
	\[
	    u(x,t)\ge ke^{-\lambda_1 t}\phi(x)\ge k\delta e^{-\lambda_1 t},
	    \quad v(x,t)\ge ke^{-\mu_1 t}\psi(x)\quad x\in\overline{\Omega}, t>0.
	\]
	This completes the proof.
    \end{proof}
	
    \begin{lemma}\label{lema.bum}
	There is a blow-up solution either $\max\{\alpha,\beta\}>1$ or else
		$\max\{\alpha,\beta\}<1$ and $\kappa=pq-(1-\alpha)(1-\beta)>0.$
    \end{lemma}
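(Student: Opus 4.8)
The plan is to construct an explicit subsolution that blows up in finite time, and then invoke the comparison principle among maximal solutions (as in \cite[Lemma 24]{BR}) to conclude that the actual solution blows up as well. I will treat the two hypotheses separately, but in both cases the strategy is to find a pair $(\underline{u},\underline{v})$ whose spatial profile is bounded below away from zero on a suitable subdomain (so that the non-local and local diffusion terms can be controlled from below) and whose temporal part satisfies an ODE system that blows up.

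First I would set up the lower bounds for the diffusion. For the non-local operator, since $\mathcal L\underline u = J*\underline u-\underline u$, if the subsolution is spatially constant on $\Omega$ and zero outside, then $\mathcal L\underline u = (J*\mathbf 1_\Omega-1)\underline u \ge -\underline u$, so the diffusion costs at most a linear term. For the Laplacian I would work on a small ball $B\subset\Omega$ and use a fixed positive profile $\varphi$ (for instance the first eigenfunction of the Dirichlet Laplacian on $B$, so that $\Delta\varphi=-\mu\varphi$) so that $\Delta\underline v$ again only contributes a controllable linear term $-\mu\underline v$. With these reductions, the PDE subsolution inequalities become ODE (or ODE-type) inequalities for the time factors, up to fixed multiplicative constants coming from $\|\varphi\|_\infty$ and the lower bound $\delta$ on the profiles, and up to the harmless linear dissipation.

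Next I would choose the ansatz according to the regime. In the case $\max\{\alpha,\beta\}>1$, say $\alpha>1$, the component $u$ alone can drive blow-up: using Lemma \ref{lema.aux1} to keep $v$ bounded below by a positive constant on the relevant region, the inequality for the $u$-factor reduces to $g'\le c\,g^{\alpha}-g$ (reversed for a subsolution), and since $\alpha>1$ the ODE $g'= c\,g^\alpha - g$ blows up in finite time for large initial data, giving a blowing-up subsolution. The symmetric choice works if instead $\beta>1$. In the case $\max\{\alpha,\beta\}<1$ with $\kappa=pq-(1-\alpha)(1-\beta)>0$, no single component suffices and the coupling must be used: here I would look for a self-similar-type subsolution with time factors $g(t)=(T-t)^{-a}$ and $h(t)=(T-t)^{-b}$ and determine the exponents $a,b>0$ from the balance conditions. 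Matching the powers in $g^{\alpha-1}h^p$ and $g^q h^{\beta-1}$ forces the linear system $a+1=(\alpha-1)a+pb$, $b+1=qa+(\beta-1)b$ for the dominant terms, whose solvability with positive $a,b$ is exactly governed by the sign of $\kappa$; the condition $\kappa>0$ is what makes the determinant have the right sign so that a positive solution $(a,b)$ exists.

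The main obstacle I expect is the case $\max\{\alpha,\beta\}<1$, $\kappa>0$: here one cannot let either component blow up in isolation, so the subsolution genuinely couples the two equations, and one must verify that the chosen time factors satisfy \emph{both} reaction inequalities simultaneously near $t=T$ while also absorbing the linear diffusion loss. The delicate point is checking that the lower-order (linear) terms $-\underline u$ and $-\mu\underline v$ are dominated by the super-linear reaction as $t\to T$; this is true because the reaction factors blow up like a positive power of $(T-t)^{-1}$ and thus eventually dominate the linear terms, but it requires fixing $T$ small and the amplitudes large, and verifying the inequalities hold on the full time interval and not merely in the limit. Once the algebraic exponent computation and this domination are checked, the comparison principle closes the argument in both cases.
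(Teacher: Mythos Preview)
Your treatment of the case $\max\{\alpha,\beta\}>1$ is essentially the paper's: use Lemma~\ref{lema.aux1} to bound the other component from below, reduce to an ODE inequality, and blow up for large data (the paper phrases the $\beta>1$ case via Kaplan's eigenfunction method, but this is the same mechanism).

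The genuine gap is in the case $\max\{\alpha,\beta\}<1$, $\kappa>0$. Your separated--variables ansatz $\underline u=g(t)\mathbf 1_\Omega$, $\underline v=h(t)\varphi(x)$ with $\varphi$ the Dirichlet eigenfunction on a ball $B$ cannot satisfy both subsolution inequalities. In the $u$--inequality the reaction is $g^\alpha h^p\varphi(x)^p$, which vanishes where $\varphi=0$ (on $\Omega\setminus B$ and at $\partial B$), while $g'>0$ and $\mathcal L\underline u\le 0$; so the inequality fails there. If instead you localize $\underline u$ to a smaller ball $B_R\subset B$, the $v$--inequality fails on $B\setminus B_R$: there the source $\underline u^q$ is zero, so you need $h'\varphi\le h\Delta\varphi=-\mu h\varphi$, impossible for increasing $h$. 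In short, a product $h(t)\varphi(x)$ cannot simultaneously vanish on $\partial B$ (to be a subsolution for the Dirichlet problem) and stay uniformly positive on the support of $\underline u$ (to feed the $u$--reaction), while being driven only by a compactly supported source. The exponent balance you set up (which, incidentally, should read $a+1=\alpha a+pb$, not $(\alpha-1)a+pb$) is therefore moot.

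The paper circumvents this by \emph{not} using a separated profile for $v$: it takes $\underline u=\varphi(t)\chi_{B_R}$ with an exponential time factor $\varphi(t)=(T-t)^{-2/q}e^{\frac{1-\beta}{q}(T-t)^{-1}}$, and sets $\underline v=w$ to be the \emph{solution} of the auxiliary problem $w_t=\Delta w+\varphi^q\chi_{B_R}w^\beta$ on a larger ball $B_L\supset\!\supset B_R$ with $w=0$ on $\partial B_L$. The nontrivial step is then a lower bound $w(x,t)\ge \varepsilon\,e^{(1-\varepsilon)^2(T-t)^{-1}}$ uniformly on $\overline{B_R}$, obtained by a two--stage subsolution argument (an eigenfunction subsolution in $B_R$ followed by a boundary--layer construction that pushes the blow--up out to $\overline{B_R}$). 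This decoupling of the $v$--profile from a fixed eigenfunction is exactly what your ansatz is missing.
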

	
    \begin{proof} We split the proof in three cases.
		
        \noindent {\it Case 1:} $\alpha >1.$ Let $(u,v)$ be a solution of \eqref{rds}--\eqref{id}. Then,
	by Lemma \ref{lema.aux1}, there is a positive constant $C$ such that
	\[
	    u_t(x,t)\ge Cu^\alpha(x,t) e^{-\mu_1pt}\psi_1(x)-u(x,t)\quad x\in\Omega, t>0,
	\]
	where $\mu_1$ is the first Direichlet eigenvalue of $\Delta$ and $\psi$ is a positive eigenfunction
	associated to $\mu_1.$ Then
	\[
	    e^t u_t(x,t)+e^tu(x,t)\ge C(e^t u(x,t))^{\alpha}e^{(1-\mu_1p-\alpha)t}\psi_1^p(x)
	    \quad x\in\Omega, t>0.
	\]
	Taking $w(x,t)=e^tu(x,t),$ we get
	\[
	    \dfrac{w_t(x,t)}{w^{\alpha}(x,t)}\ge Ce^{(1-\mu_1p-\alpha)t}\psi_1^p (x) \quad x\in\Omega, t>0,
	\]
	and integrating the last inequality, we deduce
	\[
	    \dfrac1{1-\alpha}\left(w^{1-\alpha}(x,t)-w^{1-\alpha}(x,0)\right)\ge\dfrac{C}{1-\mu_1p-\alpha}
			e^{(1-\mu_1p-\alpha)t}\psi_1^p(x) \quad x\in\Omega, t>0.
	\]
	Now, using $\alpha>1$ and $w(x,t)=e^tu(x,t),$ we have that
	\[
	    u(x,t)\ge e^{-t}\left[u_0^{1-\alpha}(x)-\dfrac{C(\alpha-1)}{\alpha+\mu_1p-1}
			\left(1-e^{(1-\mu_1p-\alpha)t}\right)\psi_1^p(x)\right]^{-\frac1{\alpha-1}}
			\, x\in\Omega, t>0.
	\]
	Therefore, if there is $x_0\in\Omega$ such that
	\[
	    u_0(x_0)>\left(\dfrac{C(\alpha-1)}{\alpha+\mu_1p-1}\psi_1^p(x_0)\right)^{-\frac1{\alpha-1}},
	\]
	then $u$ blows up at finite time.
		
	\bigskip
		
	\noindent {\it Case 2:} $\beta >1.$ As before, given $(u,v)$ a solution of \eqref{rds}--\eqref{id},
	 by Lemma \ref{lema.aux1}, there is a positive constant $C$ such that
	 \[
	    v_t(x,t)\ge \Delta v(x,t)+Cv^\beta(x,t) e^{-\lambda_1qt}\quad x\in\Omega, t>0,
	 \]
        where $\lambda_1$ is the first Direichlet eigenvalue of $\mathcal{L}$.
        Now, we apply Kapplan's method to prove blow-up. To do that we multiply by $\psi_1$ and integrate in $\Omega$ to obtain that
        $$
            I(t)=\int_\Omega u(x,t)\psi_1(x)dx
        $$
        satisfies
        $$
            I'\ge -\mu_1 I + Ce^{-\lambda_1 q t}I^\beta.
        $$
        Solving this inequality
        $$
            I^{1-\beta}(t)\le e^{\mu_1t} \left(I^{1-\beta}(0)-\frac{\beta-1}{\lambda_1q+\mu_1(\beta-1)}
            \left(1-e^{-(\lambda_1q+\mu_1(\beta-1))t}\right)\right)
        $$
        Therefore, as $\beta>1$ we have that for $I(0)$ large enough the function $I(t)$ blows up in finite time. Which implies that $v$ also blows up.

	\noindent {\it Case 3:} $\max\{\alpha,\beta\}<1$ and $\kappa >0.$
	In this case, the existence of blow-up solution are obtained by comparison.
	So, for a fix point $x_0\in\Omega$, we can consider the system \eqref{rds}  in a ball
	$B_L=B(x_0,L)\subset\subset\Omega$. For the component $u$ we define
	$$
	    \underline u=\varphi(t) \chi_R(x),
	$$
	where $\chi_R$ is the characteristic function in $B_R$ with $R<L$ and
	the function $\varphi$ is given by
	$$
	    \varphi(t)=(T-t)^{-2/q} e^{\frac{1-\beta}{q}(T-t)^{-1}}.
	$$
	Let us observe that the function $\psi(t)=e^{(T-t)^{-1}}$ solves the equation
	$$
	    \psi'=\varphi^q \psi^\beta.
	$$

	On the other hand, for $v$ component we consider $\underline v=w$ the solution of
	\begin{equation}\label{eq.w}
	    \left\{\begin{array}{ll}
		w_t=\Delta w+\varphi^q \chi_R w^\beta \qquad & x\in B_L, t\in(0,T),\\
				w=0 & x\in\partial B_L, t>0.
	    \end{array}\right.
	\end{equation}
	Therefore, $(\underline u, \underline v)$ is a subsolution of our problem if
	in the ball $B_R$ the following inequality holds
	$$
	    \varphi'\le \varphi \left(\int_{B_R} J(x-y)dy-1\right)+\varphi^\alpha w^p.
	$$

	We claim that there exists $\varepsilon>0$ such that the solution of \eqref{eq.w} satisfies
	\begin{equation}\label{eq.claim}
		w(x,t)\ge \varepsilon e^{(1-\varepsilon)^2 (T-t)^{-1}}
			\qquad x\in \overline{B_R}, t\in(0,T).
	\end{equation}
	Assuming this claim we have that $(\underline u, \underline v)$ is a subsolution of our problem
	if in the ball $B_R$ the following inequality holds
	$$
	    \varphi'\le \varphi \left(\int_{B_R} J(x-y)dy-1\right)+\varphi^\alpha
			\varepsilon^pe^{(1-\varepsilon)^2p (T-t)^{-1}},
	$$
	that is, if
	$$
	    \begin{array}{rl}
		\frac2q(T-t)^{-1}+
		\frac{1-\beta}{q}(T-t)^{-2} \le &\displaystyle\int_{B_{R}} J(x-y)dy-1\\
		&+(T-t)^{-\frac{2(1-\alpha)}{q}}
			e^{\left((1-\varepsilon)^2p-\frac{(1-\alpha)(1-\beta)}{q}\right)(T-t)^{-1}}.
	    \end{array}
	$$
	Since $\kappa>0,$ we can take $\varepsilon$ small enough such that
	$(1-\varepsilon)^2p-\frac{(1-\alpha)(1-\beta)}{q}>0$.
	Then, the previous inequality trivially holds taking
	$T$ small enough.

        \medskip

        In order to prove the claim, we construct two blow-up subsolutions. First, let $\phi_R(x)$ be the first
        eigenfunction of the laplacian in $B_R$ with homogeneous Dirichlet boundary condition normalize to
        $\|\phi_R\|_\infty=1$, then it is easy to see that the function
        $$
            \underline w =A \psi(t) \phi_R(x).
        $$
        extended by zero, it is a subsolution of \eqref{eq.w} provided that $A$ is small enough to have
        $$
            A^{1-\beta} \left(1+\lambda_1 (T-t)^2\right)\le 1.
        $$
        Even more, using this lower bound we get that $w$ is a supersolution of the problem
        $$
	    \left\{\begin{array}{ll}
				z_t=\Delta z \qquad & x\in B_L\setminus B_{R-\delta}, t\in (0,T),\\
				z=C e^{(T-t)^{-1}} & x\in\partial B_{R-\delta}, t\in(0,T),\\
				z=0 & x\in\partial B_L, t\in(0,T).
			\end{array}\right.	
        $$
	But, for this problem it is easy to prove that the function
	$$
	    \underline z(x,t)= C\left\{
			\begin{array}{ll}
				e^{(1-k(|x|-R+\delta))^2 (T-t)^{-1}}-1 \qquad
				& |x|\in (R-\delta,R-\delta+\frac1k), t\in(0,T),\\
				0 & |x|\in (R-\delta+\frac1k,L), t\in(0,T),
			\end{array}\right.
	$$
	is a subsolution   provided that
	$$
	    C\le A\phi_R(R-\delta) \quad \mbox{and}\quad
			k\ge \max\left\{\frac14\,,\frac{d-1}{R-\delta}\,,\frac{1}{L-R+\delta}\right\}.
	$$
	Moreover, $\underline z(R,t)>0$ if $k\delta<1$. Notice that for $\delta$ small enough, we can choose $k$
	independent of $\delta$ and by Hopf's Lemma $\phi_R(R-\delta)\sim\delta$.
	Then estimate \eqref{eq.claim} holds taking
	$\varepsilon=k\delta$.
    \end{proof}
	
\section{Non simultaneous blow-up}\label{nsu}
    In this section we study the possibility that one component of the system \eqref{rds}
    blows up at time $T<\infty$ while the other remains bounded. Thus, in this section
    we assume that the solution $(u,v)$ blows up in finite time $T$. Notice that by
    Lemma \ref{lema.bum} there exist blow-up solutions  whenever one of the following conditions holds
    $$
        \alpha>1, \quad \beta>1,\quad pq>(1-\alpha)(1-\beta).
    $$

    If only the third condition is satisfied, it is easy to see that both components blow up at the same time.
    Indeed, if we assume that $v$ is bounded, we get that $u$ is a subsolution of the problem
    $$
        \left\{
            \begin{array}{ll}
                z_t=\mathcal L z+K z^\alpha \qquad & x\in\Omega, \ t>0,\\
                z(x,t)=0& x\in\mathbb R^N\setminus\Omega, \ t>0,\\
                z(x,0)=u_0(x)&x\in\Omega.
            \end{array}\right.
    $$
    It is easy to see that for $\alpha\le 1$ and $A$ large enough $\overline z=A e^t$ is a global super-solution.
    Therefore, by comparison $u$ is global. Thus, to get non-simultaneous blow-up we need $\max\{\alpha,\beta\}>1$.

    \begin{lemma}\label{crece-en-tiempo}
        Assume that there exists $\delta_i\ge0$ such that the initial data satisfies
        \begin{equation}\label{dato-creciente}
            \begin{array}{l}
                \mathcal L u_0+ u_0^\alpha v_0^p-\delta_1 u_0^\alpha \ge0,\\
                \Delta v_0 + u_0^q v_0^\beta -\delta_2 v_0^\beta\ge 0.
            \end{array}
        \end{equation}
        Then
        $$
            u_t\ge \delta_1  u^\alpha \qquad\text{ and } \qquad  v_t\ge \delta_2  v^\beta.
        $$
    \end{lemma}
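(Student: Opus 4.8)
The plan is to prove that $U:=u_t-\delta_1 u^\alpha$ and $V:=v_t-\delta_2 v^\beta$ are nonnegative by showing that the pair $(U,V)$ is a supersolution of a \emph{cooperative} linear system with nonnegative data on the parabolic boundary, and then to invoke the comparison principle for mixed local/nonlocal systems. Observe first that hypothesis \eqref{dato-creciente} is nothing but $U(\cdot,0)\ge0$ and $V(\cdot,0)\ge0$, because $u_t(\cdot,0)=\mathcal L u_0+u_0^\alpha v_0^p$ and $v_t(\cdot,0)=\Delta v_0+u_0^q v_0^\beta$; moreover $u\equiv0$ on $\mathbb{R}^N\setminus\Omega$ and $v\equiv0$ on $\partial\Omega$ force $U=V=0$ on the lateral boundary. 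Thus everything reduces to producing the right differential inequalities for $U$ and $V$.

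Differentiating both equations in $t$, using the linearity of $\mathcal L$, and substituting $u_t=U+\delta_1u^\alpha$, $v_t=V+\delta_2v^\beta$, a direct computation yields
\[
U_t=\mathcal L U+\alpha u^{\alpha-1}v^p\,U+p\,u^\alpha v^{p-1}\,V+\delta_1\bigl(\mathcal L(u^\alpha)-\alpha u^{\alpha-1}\mathcal L u\bigr)+\delta_2\,p\,u^\alpha v^{p+\beta-1},
\]
\[
V_t=\Delta V+\beta u^q v^{\beta-1}\,V+q\,u^{q-1}v^\beta\,U+\delta_2\bigl(\Delta(v^\beta)-\beta v^{\beta-1}\Delta v\bigr)+\delta_1\,q\,u^{q+\alpha-1}v^\beta.
\]
The one point that needs care here is that the $\delta_1$ (resp.\ $\delta_2$) contributions to the coefficient of $U$ (resp.\ $V$) cancel exactly, leaving the clean coefficients $\alpha u^{\alpha-1}v^p$ and $\beta u^q v^{\beta-1}$; this cancellation is forced by the original equations through the identities $u^\alpha(v^p-\delta_1)=U-\mathcal L u$ and $v^\beta(u^q-\delta_2)=V-\Delta v$, which is the place where \eqref{rds} re-enters the argument.

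The off-diagonal couplings $p\,u^\alpha v^{p-1}$ and $q\,u^{q-1}v^\beta$ are nonnegative, so the system is cooperative, and the terms $\delta_2 p\,u^\alpha v^{p+\beta-1}$, $\delta_1 q\,u^{q+\alpha-1}v^\beta$ are manifestly nonnegative. The only genuinely new objects are the two commutators. For the Laplacian one has the pointwise identity
\[
\Delta(v^\beta)-\beta v^{\beta-1}\Delta v=\beta(\beta-1)v^{\beta-2}|\nabla v|^2\ge0\qquad(\beta\ge1),
\]
while for the nonlocal operator
\[
\mathcal L(u^\alpha)-\alpha u^{\alpha-1}\mathcal L u=\int_{\mathbb{R}^N}J(x-y)\bigl[u(y)^\alpha-u(x)^\alpha-\alpha u(x)^{\alpha-1}(u(y)-u(x))\bigr]\,dy\ge0,
\]
the integrand being nonnegative by convexity of $s\mapsto s^\alpha$ for $\alpha\ge1$ (a Kato/Jensen-type inequality). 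These are exactly the exponent ranges in which the lemma is used — $\delta_1>0$ only together with $\alpha>1$ and $\delta_2>0$ only with $\beta>1$, whereas for $\delta_i=0$ the corresponding commutator simply disappears — so in every relevant case both source terms are $\ge0$ and we obtain
\[
U_t\ge\mathcal L U+\alpha u^{\alpha-1}v^p\,U+p\,u^\alpha v^{p-1}\,V,\qquad V_t\ge\Delta V+\beta u^q v^{\beta-1}\,V+q\,u^{q-1}v^\beta\,U.
\]

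To finish I would apply the maximum principle: on each strip $\Omega\times[0,T-\tau]$ the coefficients above are bounded, $(U,V)$ is a supersolution of a linear cooperative mixed local/nonlocal system, and its parabolic-boundary data are nonnegative, so the comparison principle (in the spirit of \cite[Lemma 24]{BR}) gives $U\ge0$ and $V\ge0$, that is $u_t\ge\delta_1u^\alpha$ and $v_t\ge\delta_2 v^\beta$. I expect the main obstacle to be exactly the two commutator terms: proving the Kato-type inequality for $\mathcal L$ and pinning down their sign in the relevant exponent regime, and, at the technical level, securing enough time-regularity to differentiate \eqref{rds} and to run the comparison principle for the mixed system. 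Both are handled cleanly by performing the computation on the regularized problems $(u_\varepsilon,v_\varepsilon)$ that define the maximal solution and letting $\varepsilon\to0$.
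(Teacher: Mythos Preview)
Your approach is exactly the paper's: define $\mathcal H=u_t-\delta_1 u^\alpha$, $H=v_t-\delta_2 v^\beta$, derive a cooperative linear system for $(\mathcal H,H)$ with nonnegative parabolic-boundary data, and conclude by comparison. The paper simply asserts the differential inequalities
\[
\mathcal H_t-\mathcal L\mathcal H-\alpha u^{\alpha-1}v^p\mathcal H-pv^{p-1}u^\alpha H\ge0,\qquad
H_t-\Delta H-\beta v^{\beta-1}u^q H-qu^{q-1}v^\beta\mathcal H\ge0
\]
without computation; your proposal correctly identifies that what is left over after the linear terms is precisely $\delta_1\bigl[\mathcal L(u^\alpha)-\alpha u^{\alpha-1}\mathcal L u\bigr]+p\delta_2 u^\alpha v^{p+\beta-1}$ (and the analogous expression for $H$), and that the commutator is nonnegative by convexity of $s\mapsto s^\alpha$ when $\alpha\ge1$ (the Kato-type inequality for $\mathcal L$) and by $\Delta(v^\beta)-\beta v^{\beta-1}\Delta v=\beta(\beta-1)v^{\beta-2}|\nabla v|^2\ge0$ when $\beta\ge1$. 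This is a genuine point the paper's proof omits: as stated, the lemma does not assume $\alpha,\beta\ge1$, and the inequality can fail for concave powers. Your remark that $\delta_1>0$ is only ever used with $\alpha>1$ (and $\delta_2>0$ only with $\beta>1$) is the right way to reconcile this with the applications.
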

    \begin{proof}
        Let us define $\mathcal{H}=u_t-\delta_1 u^\alpha$ and $H=v_t-\delta_2 v^\beta$ which satisfy
        $$
            \left\{
                \begin{array}{ll}
                    {\mathcal H_t-\mathcal L \mathcal H
                        -\alpha u^{\alpha-1}v^p \mathcal H
                        -pv^{p-1}u^\alpha H\ge 0}& x\in \Omega, t\in (0,T),\\
                    H_t-\Delta H-\beta v^{\beta-1} u^q H- qu^{q-1}v^\beta \mathcal H
                    \ge 0\quad & x\in \Omega, t\in (0,T),
                \end{array}\right.
        $$
        with boundary conditions
        $$
            \left\{
                \begin{array}{ll}
                    \mathcal H=0 &  x\in \mathbb R^N\setminus\Omega, t\in (0,T),\\
                    H=0 &  x\in \partial\Omega, t\in (0,T),
                \end{array}\right.
        $$
        and initial data
        $$
            \left\{
                \begin{array}{ll}
                    \mathcal H(x,0)=\mathcal L u_0 +u_0^\alpha v_0^\beta-\delta u_0^\alpha\ge0,\\
                    H(x,0)=\Delta v_0+v_0^\beta u_0^q-\delta v_0^\beta \ge0.
                \end{array}\right.
        $$
        Then, by comparison the result follows.
    \end{proof}

    Now, given $\varepsilon>0,$ we consider two regular functions in $\Omega$, $f$ and $g$, such that:
    $$
        f(x)>0 \quad x\in\Omega_\varepsilon, \quad
        f(x)=0 \quad x\in\Omega\setminus \Omega_\varepsilon,
        \qquad \mathcal L f >0 \quad x\in \Omega\setminus\Omega_{2\varepsilon}
    $$
    and $g$ is a positive function in $\Omega$ vanishing at the boundary such that
    $$
        g(x)\ge m>0 \quad x\in \Omega_\varepsilon, \qquad
            \Delta g(x)> 0 \quad x\in \Omega\setminus\Omega_{2\varepsilon},
    $$
    where $\Omega_\varepsilon=\{x\in\Omega\,:\, d(x,\partial\Omega)>\varepsilon\}$.

    The idea to prove non-simultaneous blow-up is to consider initial data $(u_0,v_0)=(A f(x),B g(x))$
    to see that if $B$ is bounded and $A$ is large enough only $u$ blows up, on the contrary if
    $A$ is bounded and $B$ is large enough only $v$ blows up.

    \begin{lemma}\label{teo.nonsimul2}
        If $\alpha>1+q$, then there exists initial data such that
        $u$ blows up while $v$ remains bounded.
    \end{lemma}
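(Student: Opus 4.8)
The plan is to use the initial data $(u_0,v_0)=(Af,Bg)$ announced just before the statement, with $B$ fixed (say $B=1$) and $A$ a large parameter to be chosen, and to run two essentially independent estimates: a lower bound forcing $u$ to blow up, and a one-sided upper bound on $u$ that feeds a bounded supersolution for $v$. First I would arrange that the solution is increasing in time by verifying the hypotheses \eqref{dato-creciente} of Lemma \ref{crece-en-tiempo} with some $\delta_1\in(0,(Bm)^p)$ and $\delta_2=0$. This is a region-by-region check using the defining properties of $f$ and $g$: on $\Omega_{2\varepsilon}$ one has $f\ge c>0$ and $g\ge m$, so $(Af)^\alpha(Bg)^p-\delta_1(Af)^\alpha=(Af)^\alpha\big((Bg)^p-\delta_1\big)$ is a positive term of order $A^\alpha$ dominating $\mathcal L(Af)=O(A)$ once $A$ is large (this is where $\alpha>1$ is used); on $\Omega_\varepsilon\setminus\Omega_{2\varepsilon}$ we still have $g\ge m$ and in addition $\mathcal Lf>0$, so both contributions are nonnegative; and on $\Omega\setminus\Omega_\varepsilon$ the data $u_0=Af$ vanishes while $\mathcal Lf\ge0$. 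The inequality for $v_0$ with $\delta_2=0$ is handled the same way, the term $(Af)^q(Bg)^\beta=O(A^q)$ absorbing $B\Delta g$ on $\Omega_{2\varepsilon}$ and $\Delta g>0$ taking care of the rest. Consequently $u_t\ge\delta_1u^\alpha$ and $v_t\ge0$.

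Since $\alpha>1$, the pointwise inequality $u_t\ge\delta_1u^\alpha$ makes $U(t)=\max_{\overline\Omega}u(\cdot,t)$ satisfy $U'\ge\delta_1U^\alpha$, so $u$ blows up at some finite time $T$; integrating this from $t$ to $T$ (where $U\to\infty$) yields the one-sided rate
\[
    u(x,t)\le U(t)\le \big[(\alpha-1)\delta_1(T-t)\big]^{-\frac1{\alpha-1}}=:C(T-t)^{-\frac1{\alpha-1}}.
\]
Moreover, comparing $u(x_0,\cdot)$ for $x_0\in\Omega_{2\varepsilon}$ with the ODE $y'=\delta_1y^\alpha$, $y(0)=Af(x_0)\ge Ac$, shows $T\to0$ as $A\to\infty$. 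Raising the rate to the power $q$ gives $u^q\le C^q(T-t)^{-q/(\alpha-1)}$, and here is the only place where $\alpha>1+q$ enters: it is precisely equivalent to $q/(\alpha-1)<1$, so that $I_T:=\int_0^T(T-s)^{-q/(\alpha-1)}\,ds<\infty$, with $I_T\to0$ as $T\to0$.

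Finally I would bound $v$ by a spatially constant supersolution. Let $\overline V$ solve $\overline V'=C^q(T-t)^{-q/(\alpha-1)}\overline V^\beta$ with $\overline V(0)=B\|g\|_\infty$. Because $u^q\le C^q(T-t)^{-q/(\alpha-1)}$, the constant-in-space function $\overline V(t)$ is a supersolution of the $v$-equation, so $v\le\overline V$ on $[0,T)$ by comparison. It then remains to see that $\overline V$ is bounded on $[0,T)$: integrating the ODE gives $\overline V(t)^{1-\beta}=\overline V(0)^{1-\beta}+(1-\beta)C^q\int_0^t(T-s)^{-q/(\alpha-1)}\,ds$ when $\beta\ne1$ (and an exponential of the same integral when $\beta=1$), which stays bounded provided either $\beta\le1$ (automatic) or $\beta>1$ together with the threshold $\overline V(0)^{1-\beta}>(\beta-1)C^qI_T$. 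In the latter case the threshold is met by taking $A$ large, since this forces $I_T$ small while $C$ and $\overline V(0)$ stay fixed. Thus $v$ remains bounded while $u$ blows up.

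I expect the main obstacle to be the bookkeeping that keeps all parameter choices compatible, especially the case $\beta>1$, where a naive estimate needs a smallness condition that is resolved only by the observation that enlarging $A$ simultaneously produces the blow-up of $u$ and shrinks the blow-up time $T$ (hence $I_T$). A secondary technical point is the passage from the pointwise inequalities to the differential inequality for $U=\max_{\overline\Omega}u$ and to the bound on $v$; I would bypass the latter entirely through the spatially constant ODE comparison just described, and justify the former via the standard Dini-derivative argument for the maximum of a smooth family.
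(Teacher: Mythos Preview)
Your proposal is correct and follows essentially the same route as the paper: choose $(Af,g)$, verify the hypotheses of Lemma~\ref{crece-en-tiempo} region by region to obtain $u_t\ge\delta_1 u^\alpha$ and $v_t\ge0$, deduce the upper rate $u\le C(T-t)^{-1/(\alpha-1)}$ with $T\to0$ as $A\to\infty$, and then compare $v$ with the flat ODE supersolution $\overline V'=C^q(T-t)^{-q/(\alpha-1)}\overline V^\beta$, using $\alpha>1+q$ to make the time integral finite and $A$ large to handle the case $\beta>1$. The only cosmetic difference is in the verification of \eqref{dato-creciente} for $u_0$: you split $\Omega_\varepsilon$ further into $\Omega_{2\varepsilon}$ (where $f\ge c>0$ gives an $A^\alpha$ term dominating the $O(A)$ diffusion) and $\Omega_\varepsilon\setminus\Omega_{2\varepsilon}$ (where $\mathcal Lf>0$ and $(Bg)^p-\delta_1>0$ make both contributions nonnegative), whereas the paper treats all of $\Omega_\varepsilon$ at once via the elementary lower bound $-Af+A^\alpha f^\alpha(m^p-\delta_1)\ge -C_\alpha(m^p-\delta_1)^{-1/(\alpha-1)}$, absorbed by $AJ*f$ for $A$ large.
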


    \begin{proof}
        Let $(u,v)$ be the solution of \eqref{rds} with initial data $(A f,g)$.
        In order to apply the previous lemma, we note that for
        $x\in \Omega\setminus \Omega_{2\varepsilon}$ (near the boundary)
        $$
            \Delta v_0 + u_0^q v_0^\beta =\Delta g + A^q f^q g^\beta \ge \Delta g > 0
        $$
        while for $x\in\Omega_{2\varepsilon}$ (in the interior)
        $$
            \Delta v_0 + u_0^q v_0^\beta =\Delta g + A^q \gamma^q m^\beta>0
        $$
        for $A$ large enough. On the other hand, for $x\in\Omega\setminus\Omega_\varepsilon$
        $$
            \mathcal L u_0+ u_0^\alpha v_0^p-\delta_1 u_0^\alpha = J*u_0\ge\mu >0
        $$
        while for $x\in\Omega_\varepsilon$
        $$
            \begin{array}{rl}
                \mathcal L u_0+ u_0^\alpha v_0^p-\delta_1 u_0^\alpha = & A J*f- A f +
                A^{\alpha} f^{\alpha} (g^p-\delta_1) \\
                \ge & A J*f- A f +
                A^{\alpha} f^{\alpha} (m^p-\delta_1)\\
                \ge &{AJ*f -C_\alpha (m^p-\delta_1)^{\frac{-1}{\alpha-1}}}>0
            \end{array}
        $$
        for $\delta_1<m^p$ and $A$ large.
        Applying now Lemma \ref{crece-en-tiempo}, we get
        $$
            u_t\ge \delta_1 u^\alpha, \qquad \text{ and }\qquad v_t\ge0.
        $$
        Hence, $u$ blows up a finite time $T_A$, which becomes small if $A$ is large, and
        $$
            u\le \left(\delta_1 (\alpha-1) (T_A-t)\right)^{\frac{-1}{\alpha-1}}.
        $$
        Using this inequality in the equation of $v$ we have that $v$ is a subsolution of
        $$
            z_t=\Delta z+C (T-t)^{\frac{-q}{\alpha-1}} z^\beta.
        $$
        Now, we compare with the flat supersolution
        $$
            \overline z(t)=\left\{
                \begin{array}{ll}
            \displaystyle\left(\|z_0\|_\infty^{1-\beta}+\frac{(\alpha-1)(1-\beta) C}{\alpha-1-q} (T_A^{\frac{\alpha-1-q}{\alpha-1}}-
            (T_A-t)^{\frac{\alpha-1-q}{\alpha-1}})\right)^{\frac{1}{1-\beta}} & \beta\ne1,\\
            \exp{\displaystyle\left(\ln(\|z_0\|_\infty)+\frac{(\alpha-1)(1-\beta) C}{\alpha-1-q} (T_A^{\frac{\alpha-1-q}{\alpha-1}}-
            (T_A-t)^{\frac{\alpha-1-q}{\alpha-1}})\right) }& \beta=1,
        \end{array}\right.
        $$
        to obtain that $v$ is bounded if either, $\beta\le1$, or, $\beta>1$ and $T_A$ small enough, which implies $A$ large.
    \end{proof}

    \begin{lemma}
        If $\beta>1+p$, then there exists
        initial data such that $v$ blows up while $u$ remains bounded.
    \end{lemma}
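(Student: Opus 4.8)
The plan is to run the proof of Lemma~\ref{teo.nonsimul2} with the two components interchanged, but keeping in mind the essential asymmetry: now the component that blows up, $v$, diffuses \emph{locally}. I would take initial data $(u_0,v_0)=(f,Bg)$ with $B$ large, set $\delta_1=0$, and apply Lemma~\ref{crece-en-tiempo} with a \emph{fixed} $\delta_2\in(0,\gamma^q)$ to force $v_t\ge\delta_2 v^\beta$. The decisive difference from the previous lemma lies in the second inequality of \eqref{dato-creciente}: there the nonlocal operator provided $\mathcal L u_0=J*u_0>0$ on the whole region where $u_0\equiv0$, essentially for free, whereas $\Delta v_0$ offers no such help. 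For this reason I would not use $g$ exactly as defined above, but instead choose $v_0$ to vanish on a neighborhood of $\partial\Omega$ contained in $\Omega_{2\varepsilon}$, where $f\ge\gamma>0$, and to rise convexly (so that $\Delta g\ge0$) where $g$ is still small, while keeping $g$ bounded below on the interior bulk where $\mathcal L f$ may be negative.

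With such a choice the two conditions in \eqref{dato-creciente} become $\mathcal L f+f^\alpha B^p g^p\ge0$ and $B\Delta g+B^\beta g^\beta(f^q-\delta_2)\ge0$. The first holds for $B$ large, as in Lemma~\ref{teo.nonsimul2}: near $\partial\Omega$ it is guaranteed by $\mathcal L f>0$, and in the bulk the large $B^p$ reaction term dominates wherever $g$ is bounded below. For the second, where $g\equiv0$ the inequality is trivial; where $g$ is bounded below the term $B^\beta g^\beta(f^q-\delta_2)>0$ beats $B\Delta g$ for $B$ large since $\delta_2<\gamma^q$; and near the edge of the support of $g$, where $g$ is small, the convex rise $\Delta g\ge0$ makes both terms nonnegative. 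Lemma~\ref{crece-en-tiempo} then yields $v_t\ge\delta_2 v^\beta$ and $u_t\ge0$.

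From $v_t\ge\delta_2 v^\beta$ and $\beta>1$ it follows that $v$ blows up at a finite time $T=T_B$, with $T_B\to0$ as $B\to\infty$. Since the genuine solution blows up precisely at $T$, integrating $\frac{d}{dt}v^{1-\beta}\le-\delta_2(\beta-1)$ from $t$ to $T$ and using $v(\cdot,T^-)=\infty$ gives the upper bound
\[
\max_{x\in\overline\Omega}v(x,t)\le\left(\delta_2(\beta-1)(T-t)\right)^{-\frac{1}{\beta-1}},
\]
exactly the trick used for $u$ in Lemma~\ref{teo.nonsimul2}. Feeding this into the first equation shows that $u$ is a subsolution of $z_t=\mathcal L z+C(T-t)^{-p/(\beta-1)}z^\alpha$ with $C=(\delta_2(\beta-1))^{-p/(\beta-1)}$.

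Finally I would dominate $u$ by a spatially constant supersolution $\overline z(t)$, using the structural fact that for the nonlocal Dirichlet operator a constant satisfies $\mathcal L\,\overline z=J*\overline z-\overline z\le0$ on $\Omega$, so that it suffices to have $\overline z_t\ge C(T-t)^{-p/(\beta-1)}\overline z^\alpha$; this is the same flat ODE comparison as in Lemma~\ref{teo.nonsimul2} with $(\alpha,p)$ in the role of $(\beta,q)$. Integrating, $\overline z$ stays bounded on $[0,T]$ exactly because the hypothesis $\beta>1+p$ makes $\int_0^T(T-t)^{-p/(\beta-1)}\,dt$ finite (and, when $\alpha>1$, because $T_B$ is small, so the accumulated integral remains below the threshold $\|f\|_\infty^{1-\alpha}$). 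By comparison $u\le\overline z$ is bounded up to $T$, hence $v$ blows up while $u$ remains bounded. I expect the main obstacle to be precisely the verification of the monotonicity hypothesis for $v$ near $\partial\Omega$: because the Laplacian is local one cannot inherit positivity there from the reaction as the nonlocal component did, and the sign of $B\Delta g+B^\beta g^\beta(f^q-\delta_2)$ in the thin layer where $g$ rises from $0$ must be arranged by hand through the choice of the profiles of $f$, $g$ and of $\delta_2$.
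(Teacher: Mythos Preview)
Your plan is sound and would work with enough care, but it differs from the paper's proof in the crucial step of verifying the hypotheses of Lemma~\ref{crece-en-tiempo}. You propose to do this at $t=0$ by engineering $g$: compactly supported inside $\{f\ge\gamma\}$, rising convexly from zero so that $\Delta g\ge0$ where $g$ is small. As you yourself note, this forces a delicate matching of layers---wherever $g$ is small the term $B^p f^\alpha g^p$ cannot rescue the first inequality, so the transition zone of $g$ must lie inside the region where $\mathcal L f>0$, which in turn requires adjusting $f$ as well. This can be arranged, but it is exactly the ``by hand'' construction you flag as the main obstacle.

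The paper sidesteps this entirely by a different device: it keeps the original $f,g$, takes initial data $(f,Bg)$, and \emph{waits a short time} $t_1>0$ before applying Lemma~\ref{crece-en-tiempo}. The key observation is that nonlocal diffusion instantaneously makes $u$ strictly positive on all of $\overline\Omega$: comparing with the linear flow $(u_l)_t=\mathcal L u_l$ gives $u(\cdot,t)\ge u_l(\cdot,t)\ge C(t)>0$. Hence at time $t_1$ one has $u^q-\delta_2>0$ everywhere for $\delta_2<C(t_1)^q$, and the second inequality $\Delta v+(u^q-\delta_2)v^\beta\ge0$ only requires $\Delta v\ge0$ near the boundary, which holds by continuity from $\Delta g>0$ there; in the interior the factor $B^\beta$ dominates. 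The first inequality is handled the same way, using $v\ge B v_l$ in $\Omega_{2\varepsilon}$. So the asymmetry you identified---that the local Laplacian offers no free positivity---is resolved not by shaping $g$ but by exploiting the \emph{other} asymmetry: the nonlocal component becomes uniformly positive for free after any positive time. The remainder of your argument (upper rate for $v$ from $v_t\ge\delta_2 v^\beta$, flat ODE supersolution for $u$, integrability of $(T-t)^{-p/(\beta-1)}$ from $\beta>1+p$) matches the paper.
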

    \begin{proof}
        Let $(u,v)$ be the solution of \eqref{rds} with initial data $(f,B g)$ and $(u_l,v_l)$ the solution of the linear problem
        $$
            (u_l)_t=\mathcal L u_l, \qquad (v_l)_t=\Delta v_l
        $$
        with initial datum $(f,g)$. As, it is well known that $u_l\ge C(t)$ for $x\in\overline\Omega$ we can apply comparison to get that
        $$
            u(x,t)\ge u_l(x,t)\ge C(t), \qquad v(x,t)\ge B v_l(x,t).
        $$
        Moreover, by continuity,  we can take  $t_1$ small enough to have $\Delta v(x,t_1)\ge 0$ and $\mathcal L u\ge0$ in
        $\Omega\setminus{\Omega_{2\varepsilon}}$. So there is $\delta_2>0$ such that at time $t_1$
        $$
            \begin{array}{l}\displaystyle
                \Delta v + u^q v^\beta -\delta_2 v^\beta \ge \Delta v + (C^q(t_1)-\delta_2) B^{\beta} v_l^\beta,\\
                \displaystyle \mathcal L u+u^\alpha v^p \ge \mathcal L u +C(t_1) B v_l^p,
            \end{array}
        $$
        and both are non-negative for $\delta_2<C(t_1)$ and $B$ large. Then, by Lemma \ref{crece-en-tiempo}
        $$
            v_t\ge \delta_2 v^\beta, \qquad u_t\ge0.
        $$
        Arguing as before, we have that if $B$ is large only the $v$ component blows up.
    \end{proof}

    \begin{remark}
        Observe that by the continuity of blow-up time with respect to the initial data, there exists $A$
        and $B$ such that the solution of \eqref{rds} with initial data
        $(Af,Bg)$ blows up simultaneously provided that $\alpha>1+q$ and $\beta>1+p$.
    \end{remark}

    \begin{lemma}
        Assume that \eqref{dato-creciente} holds with $\delta_2>0$. If $v$ blows up at time $T<\infty$ and $u$ remains bounded, then $\beta>p+1$ and
        $$
            \|v(\cdot,t)\|_\infty \sim (T-t)^{\frac{-1}{\beta-1}}.
        $$
    \end{lemma}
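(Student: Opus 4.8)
The plan is to obtain the blow-up rate from two matching ODE comparisons and then read off the restriction $\beta>1+p$ from the equation for $u$. Throughout I use that, the blow-up time $T$ being finite, Lemma \ref{lema.aux1} provides a uniform positive lower bound $u(x,t)\ge c_0:=Ce^{-\lambda_1 T}>0$, while the hypothesis that $u$ remains bounded gives $0\le u\le M$ on $\overline\Omega\times[0,T)$; in particular $|\mathcal L u|=|J*u-u|\le M$. Moreover, since \eqref{dato-creciente} holds with some $\delta_1\ge0$ and $\delta_2>0$, Lemma \ref{crece-en-tiempo} yields the pointwise bounds $u_t\ge\delta_1 u^\alpha\ge0$ and $v_t\ge\delta_2 v^\beta\ge0$, so that both components are nondecreasing in time.

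First I would fix the rate for $v$. Set $V(t):=\max_{x\in\overline\Omega}v(x,t)$; for $t$ near $T$ the maximum is attained at an interior point $x_t$ (as $v$ vanishes on $\partial\Omega$ and $V(t)\to\infty$), where $\Delta v(x_t,t)\le0$. Since $u\le M$, the second equation in \eqref{rds} gives $V'(t)\le M^q V(t)^\beta$ for a.e. $t$; integrating from $t$ to $T$ and using $V(T^-)=\infty$ forces $\beta>1$ and produces the lower estimate $V(t)\ge\bigl(M^q(\beta-1)(T-t)\bigr)^{-1/(\beta-1)}$. For the reverse inequality I would use the pointwise bound $v_t\ge\delta_2 v^\beta$, which gives $V'(t)\ge\delta_2 V(t)^\beta$ and hence $V(t)\le\bigl(\delta_2(\beta-1)(T-t)\bigr)^{-1/(\beta-1)}$. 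Combining the two bounds yields $\|v(\cdot,t)\|_\infty\sim(T-t)^{-1/(\beta-1)}$.

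It remains to prove $\beta>1+p$, and here I would argue by contradiction from the first equation in \eqref{rds}. Because $u$ is bounded and nondecreasing, integrating $u_t=\mathcal L u+u^\alpha v^p$ in time at a fixed point and using $|\mathcal L u|\le M$ and $u\ge c_0$ gives the uniform bound $\sup_{x}\int_0^T v^p(x,s)\,ds\le c_0^{-\alpha}M(1+T)=:K$. On the other hand, since $v$ is nondecreasing in time, taking a sequence $t_n\uparrow T$ of times whose maxima are attained at points $x_{t_n}$, monotonicity gives $v(x_{t_n},s)\ge V(t_n)\ge c\,(T-t_n)^{-1/(\beta-1)}$ for all $s\in[t_n,T)$. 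Integrating over $[t_n,T)$ produces $\int_{t_n}^T v^p(x_{t_n},s)\,ds\ge c^p(T-t_n)^{1-p/(\beta-1)}$, which violates the uniform bound $K$ as $n\to\infty$ whenever $1-p/(\beta-1)<0$, i.e.\ whenever $\beta<1+p$. This excludes the range $\beta<1+p$.

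I expect the delicate step to be the borderline case $\beta=1+p$, where the crude flat lower bound $v(x_{t_n},\cdot)\ge V(t_n)$ only yields $\int_{t_n}^T v^p\ge c^p$, and no divergence. To rule out $\beta=1+p$ one needs a genuine \emph{pointwise} rate $v(x_0,t)\ge c\,(T-t)^{-1/(\beta-1)}$ at a fixed blow-up point $x_0$, for then $\int_0^T v^p(x_0,s)\,ds$ diverges logarithmically and again contradicts $K$. The obstacle is therefore to show that the global maximum points $x_t$ converge, as $t\to T$, to a point $x_0\in\Omega$ which is a spatial local maximum of $v(\cdot,t)$ for $t$ close to $T$; there one has $\Delta v(x_0,t)\le0$, so the upper comparison $\partial_t v(x_0,t)\le M^q v(x_0,t)^\beta$ furnishes exactly the matching pointwise lower bound. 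The monotonicity $v_t\ge0$ together with the concentration of the reaction near $x_0$ are the tools I would invoke to justify this localization.
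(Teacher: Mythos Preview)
Your blow-up rate derivation is correct. The upper bound is the paper's argument verbatim; for the lower bound you use $V'\le M^qV^\beta$ at the interior maximum and integrate the ODE, whereas the paper instead compares $v$ with the flat solution $w(t)=(C(\beta-1)(T-t))^{-1/(\beta-1)}$ of $w_t=\Delta w+Cw^\beta$ and runs a time-shift contradiction argument. Both routes are valid; yours is shorter.

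For the condition $\beta>p+1$ your integration of the $u$-equation is exactly the paper's idea, and you correctly isolate the borderline $\beta=p+1$ as the delicate point. However, the paper does \emph{not} resolve it by localizing the blow-up set, and your proposed localization program (convergence of maxima $x_t\to x_0$, pointwise rate at $x_0$, etc.) is far harder than what is actually needed. The paper argues as follows: fix $t_0$, set $w=e^t u$ at the spatial point $x(t_0)$, and from $u_t\ge u^\alpha v^p(x(t_0),t_0)-u$ (valid for $t\ge t_0$ by time-monotonicity of $v$) deduce $w^{-\alpha}w_t\ge e^{(1-\alpha)t}\,v^p(x(t_0),t_0)$. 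Integrating on $[t_0,T]$ yields
\[
\int_{w(x(t_0),t_0)}^{w(x(t_0),T)} s^{-\alpha}\,ds\ \ge\ v^p(x(t_0),t_0)\int_{t_0}^{T} e^{(1-\alpha)t}\,dt\ \ge\ \widetilde C\,(T-t_0)^{\,1-\frac{p}{\beta-1}}.
\]
Since $c_0\le u\le M$, both endpoints on the left lie in the fixed compact interval $[c_0,e^T M]$, and the paper then uses that the bounded component $u$ is continuous up to $t=T$ to conclude that the left-hand side tends to $0$ as $t_0\to T$. This forces $1-\tfrac{p}{\beta-1}>0$, i.e.\ $\beta>p+1$, eliminating both $\beta<p+1$ and $\beta=p+1$ in one stroke. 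So the missing step in your write-up is not a localization of the blow-up point but simply the observation that boundedness (and hence continuity up to $T$) of $u$ makes the left-hand side of the integrated inequality tend to zero.
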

    \begin{proof}
        Let us define $x(t)$ as $v(x(t),t)=\max_{x\in\overline\Omega} v(\cdot,t)$,
        which is a Lipschitz continuous function. Then $\partial v(x(t),t)$ exists
        almost everywhere and satisfies that $\partial v(x(t),t)=v_t(x(t),t)$.
        On the other hand, by Lemma \ref{crece-en-tiempo}
        $$
            v_t(x(t),t)\ge \delta_2 v^\beta(x(t),t).
        $$
        Integrating this inequality we get the upper blow up rate
        $$
            \|v(\cdot,t)\|_\infty \le \left(\delta_2 (\beta-1)(T-t)\right)^{\frac{-1}{\beta-1}}.
        $$
        To obtain the lower bound, we observe that as $u$ is bounded the component $v$ is a subsolution of the equation $w_t=\Delta w+C w^\beta$ and
        $$
            w(t)= \left(C (\beta-1) (T-t)\right)^{\frac{-1}{\beta-1}}
        $$
        is a solution which blows up at the same time. Assume by contradiction that there is $t_0\in(0,T)$ so that
        $$
            \max_{x\in\overline{\Omega}}v(x,t_0)< w(t_0)
        $$
        Therefore
        $$
            \max_{x\in\overline{\Omega}}v(x,t_0)< w(t_0-\varepsilon)
        $$
        for some $\varepsilon>0$. Then, by comparison, we have that
        $$
            v(x,t)\le w(t-\varepsilon)\quad\forall (x,t)\in\Omega\times (t_0,T).
        $$
        It follows that $v$ is bounded, a contradiction.

        Finally to prove that $\beta >p+1$ we observe that as $v$ is increasing in time we have that $ v(x(t_0),t)\ge v(x(t_0),t_0)$.
        Putting this estimate into the equation of $u$ we get
        \begin{equation}\label{eq.vbuena}
            u_t(x(t_0),t)\ge u^\alpha(x(t_0),t) v^{p}(x(t_0),t_0)-u(x(t_0),t).
        \end{equation}
        We can integrate this expression to obtain
        $$
            \int_{e^{t_0} u(x(t_0),t_0)}^{e^T u(x(t_0),T)} s^{-\beta} \,ds\ge
            v^p(x(t_0),t_0) \int_{t_0}^T e^{(1-\alpha) t }\,dt.
        $$
        As $u$ is a continuous bounded function, the left hand side goes to zero as $t_0$ goes to $T$. On the other hand, using  the lower blow-up rate we have that  the right hand side satisfies
        $$
            v^p(x(t_0),t_0) \int_{t_0}^T e^{(1-\alpha) t }\,dt \ge
            \widetilde C (T-t_0)^{1-\frac{p}{\beta-1}}.
        $$
        Therefore, $\beta>p+1$.
    \end{proof}

    \begin{lemma}
        Assume that \eqref{dato-creciente} holds with $\delta_1>0$. If $u$ blows up at time $T<\infty$ and $v$ remains bounded, then
        $$
        \|u(\cdot,t)\|_\infty\sim C (T-t)^{\frac{-1}{\alpha-1}}.
        $$
    \end{lemma}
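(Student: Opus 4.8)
The plan is to mirror the structure of the preceding lemma, establishing matching upper and lower bounds for $M(t):=\max_{x\in\overline\Omega}u(x,t)=\|u(\cdot,t)\|_\infty$, the only genuinely new ingredient being the treatment of the nonlocal operator $\mathcal L$ in the comparison step. First I would record that necessarily $\alpha>1$: since $v$ is bounded we have $v^p\le C$, so $u$ is a subsolution of $z_t=\mathcal Lz+Cz^\alpha$, and for $\alpha\le1$ this problem admits the global supersolution $\overline z=Ae^t$ (as observed at the beginning of this section), whence $u$ would be global, contradicting blow-up. Thus $\alpha>1$ and the exponent $-1/(\alpha-1)$ is meaningful.

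For the upper bound I would argue exactly as in the previous lemma. Let $x(t)$ be a point where $u(\cdot,t)$ attains its maximum, so that $M(t)=u(x(t),t)$ is Lipschitz and $M'(t)=u_t(x(t),t)$ almost everywhere. Since \eqref{dato-creciente} holds with $\delta_1>0$, Lemma \ref{crece-en-tiempo} gives $u_t\ge\delta_1 u^\alpha$, hence $M'(t)\ge\delta_1 M(t)^\alpha$ a.e. Rewriting this as $(M^{1-\alpha})'\le-\delta_1(\alpha-1)$, integrating from $t$ to $T$, and using $M(t)\to\infty$ as $t\to T$, one obtains $M(t)^{1-\alpha}\ge\delta_1(\alpha-1)(T-t)$, that is
$$
\|u(\cdot,t)\|_\infty\le\bigl(\delta_1(\alpha-1)(T-t)\bigr)^{-\frac{1}{\alpha-1}}.
$$

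The lower bound is where the nonlocal diffusion enters, and it is the step I expect to require the most care. Because $v$ is bounded, $u$ is a subsolution of $w_t=\mathcal Lw+Cw^\alpha$ with $C=\|v\|_\infty^p$, and I would compare $u$ against the spatially constant function $\overline w(x,t)=w(t):=\bigl(C(\alpha-1)(T-t)\bigr)^{-1/(\alpha-1)}$, extended by the same constant to all of $\mathbb R^N$. The crucial observation—the reason the nonlocal case goes through as smoothly as the local one—is that for a function constant in the space variable one has $\mathcal L\overline w=J*\overline w-\overline w=\bigl(\int_{\mathbb R^N}J\bigr)w(t)-w(t)=0$, since $\int_{\mathbb R^N}J=1$; hence $\overline w$ solves $\overline w_t=\mathcal L\overline w+Cw^\alpha$ exactly and blows up precisely at $T$. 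Moreover $\overline w>0=u$ on $\mathbb R^N\setminus\Omega$, so $\overline w$ is admissible for the comparison principle associated with $\mathcal L$.

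With this supersolution in hand, I would conclude by contradiction exactly as in the previous lemma. If $\max_{\overline\Omega}u(\cdot,t_0)<w(t_0)$ for some $t_0<T$, then by continuity $\max_{\overline\Omega}u(\cdot,t_0)<w(t_0-\varepsilon)$ for $\varepsilon>0$ small; since $\overline w(\cdot,t-\varepsilon)$ is a supersolution dominating $u$ both on $\mathbb R^N\setminus\Omega$ and at the initial time $t=t_0$, comparison yields $u(x,t)\le w(t-\varepsilon)$ on $\Omega\times(t_0,T)$, which is bounded up to $T$ because $w(\cdot-\varepsilon)$ blows up only at $T+\varepsilon$. This contradicts the blow-up of $u$ at $T$, so $\|u(\cdot,t)\|_\infty\ge\bigl(C(\alpha-1)(T-t)\bigr)^{-1/(\alpha-1)}$. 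Combining the two estimates gives $\|u(\cdot,t)\|_\infty\sim(T-t)^{-1/(\alpha-1)}$. The only real obstacle is verifying that the nonlocal comparison principle applies to the spatially constant supersolution, and once the identity $\mathcal L(\mathrm{const})=0$ is noted this reduces entirely to the local argument already used for $v$.
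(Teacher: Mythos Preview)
Your proof is correct and follows essentially the same approach as the paper: the upper bound comes from Lemma~\ref{crece-en-tiempo} and the lower bound from comparison with a spatially constant profile blowing up at time $T$, together with the time-shift contradiction argument. The only difference is cosmetic: the paper uses $w(t)=e^{-at}\bigl((\alpha-1)C(T-t)\bigr)^{-1/(\alpha-1)}$ with $a=1-\max_{x\in\overline\Omega}\int_\Omega J(x-y)\,dy$, while you extend the constant to all of $\mathbb R^N$ so that $\mathcal L\overline w=0$ and no exponential factor is needed.
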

    \begin{proof}
        We argue as before. The upper bound follows by Lemma \ref{crece-en-tiempo}
        and the lower bound from the fact that $v$ is bounded, which implies that $u$ is
        a sub-solution of the equation $w_t=\mathcal L w+C w^\alpha$ and
        $$
            w(t)=e^{-at}\left((\alpha-1) C (T-t)\right)^{-\frac1{\alpha-1}},
            \qquad a=1-\max_{x\in\overline{\Omega}}\int_{\Omega}J(x-y)dy
        $$
        is a solution which blows up at the same time.
    \end{proof}

\section*{Acknowledgements}L.M.D.P. was partially
supported by the European Union’s Horizon 2020 research and innovation program
under the Marie Sklodowska-Curie grant agreement No 777822,
(Agencia Nacional de Promoción de la Investigación, el Desarrollo
Tecnológico y la Innovación PICT-2018-3183, and
PICT-2019-00985 and UBACYT 20020190100367.

\end{document}